\author{Felix Goldberg}
\address{Department of Mathematics, Technion-IIT, Technion City, Haifa 32000, ISRAEL}
\email{felix.goldberg@gmail.com}
\title{A spectral bound for graph irregularity}
\date{August 12, 2013}
\newtheorem{thm}{Theorem}[section]
\newtheorem{lem}[thm]{Lemma}
\begin{document}

\begin{abstract}
The imbalance of an edge $e=\{u,v\}$ in a graph is defined as $i(e)=|d(u)-d(v)|$, where $d(\cdot)$ is the vertex degree. The irregularity $I(G)$ of $G$ is then defined as the sum of imbalances over all edges of $G$. This concept was introduced by Albertson who proved that $I(G) \leq \frac{n^{3}}{27}$ (where $n=|V(G)|$) and obtained stronger bounds for bipartite and triangle-free graphs. Since then a number of additional bounds were given by various authors. In this paper we prove a new upper bound, which improves a bound found by Zhou and Luo in 2011. Our bound involves the Laplacian spectral radius $\lambda$. 
\end{abstract}

\subjclass{05C35,05C50,05C07}

\keywords{irregularity, Laplacian matrix}

\maketitle

\section{Introduction}
Albertson \cite{Alb97} has defined the \emph{irregularity} of a graph $G$ as:
$$
I(G)=\sum_{(u,v) \in E(G)}{|d(u)-d(v)|},
$$
where $d(u)$ is the degree of vertex $u$. Clearly $I(G)$ is zero if and only if $G$ is regular and for non-regular graphs $I(G)$ is a measure of the defect of regularity. Albertson proved the following upper bound:
\begin{equation}\label{eq:alb}
I(G) \leq \frac{4n^{3}}{27}.
\end{equation}

Abdo, Cohen and Dimitrov \cite{AbdCohDim12} improved Albertson's bound:
\begin{equation}\label{eq:acd}
I(G) \leq \lfloor \frac{n}{3} \rfloor \lceil \frac{2n}{3} \rceil (\lceil \frac{2n}{3} \rceil - 1).
\end{equation}

Additional upper bounds on $I(G)$ have been given by various authors: Hansen and M\'{e}lot \cite{HanMel05}, Henning and Rautenbach \cite{HenRau07}, Zhou and Luo \cite{ZhoLuo08}, and Fath-Tabar \cite{FathTabar11}. These bounds are, strictly speaking, noncomparable but the bound of Zhou and Luo seems to be much much sharper than the others for most graphs. We obtain here a new upper bound which is always less than the Zhou-Luo bound or equal to it.

To state the results, let us define the quantity $Z_{G}=\sum_{u \in V(G)}{d(u)^{2}}$. It is sometimes called \emph{the first Zagreb index} of $G$ (cf. \cite{FathTabar11}).

\begin{thm}[{{\cite[Theorem 1]{ZhoLuo08}}}]\label{thm:zl}
Let $G$ be a graph on $n$ vertices and with $m$ edges. Then:
$$
I(G) \leq \sqrt{m(nZ_{G}-4m^{2})}.
$$
\end{thm}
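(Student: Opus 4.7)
The plan is to prove the inequality by a Cauchy--Schwarz argument coupled with a standard variance-style identity for the degree sequence.

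First, I would treat $I(G) = \sum_{e=\{u,v\} \in E(G)} |d(u)-d(v)| \cdot 1$ as an inner product of two length-$m$ vectors and apply the Cauchy--Schwarz inequality:
\[
I(G)^{2} \leq m \sum_{\{u,v\} \in E(G)} (d(u)-d(v))^{2}.
\]
This reduces the problem to producing a usable upper bound for $S := \sum_{\{u,v\} \in E(G)} (d(u)-d(v))^{2}$.

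The second step is to relate $S$ to the Zagreb-style quantities $nZ_G$ and $4m^{2}$. The key observation is the identity (which is essentially $n$ times the variance of the degree sequence)
\[
\sum_{\{u,v\} \subseteq V(G),\, u \neq v} (d(u)-d(v))^{2} = n \sum_{u \in V(G)} d(u)^{2} - \Bigl(\sum_{u \in V(G)} d(u)\Bigr)^{2} = nZ_{G} - 4m^{2},
\]
where the last equality uses the handshake identity $\sum_{u} d(u) = 2m$. I would verify this by expanding the left-hand side and collecting the symmetric and cross terms.

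The third step is simply the observation that $E(G)$ is a subset of the collection of unordered pairs of distinct vertices, and each summand $(d(u)-d(v))^{2}$ is nonnegative, so
\[
S = \sum_{\{u,v\} \in E(G)} (d(u)-d(v))^{2} \leq \sum_{\{u,v\} \subseteq V(G),\, u \neq v} (d(u)-d(v))^{2} = nZ_{G} - 4m^{2}.
\]
Substituting into the Cauchy--Schwarz bound and taking square roots yields $I(G) \leq \sqrt{m(nZ_{G}-4m^{2})}$.

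I do not expect any step to be a serious obstacle; the slightly nonobvious move is recognizing that the sum of squared degree-differences over \emph{edges} can be controlled by the sum over \emph{all pairs}, which is precisely the quantity that collapses to $nZ_{G}-4m^{2}$. In fact this looseness (discarding the nonedge terms) is exactly what a sharper bound, such as the one promised using the Laplacian spectral radius $\lambda$, will later seek to tighten, since $\sum_{e \in E} (d(u)-d(v))^{2} = d^{T} L d \leq \lambda \, Z_{G}$ gives a genuinely smaller upper bound on $S$.
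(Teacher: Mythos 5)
Your proof of Theorem \ref{thm:zl} is correct. Note that the paper does not actually prove this statement --- it is quoted from Zhou and Luo --- but your argument is sound and, moreover, runs exactly parallel to the paper's proof of the stronger Theorem \ref{thm:main}: the same Cauchy--Schwarz step over the $m$ edges, followed by the same Lagrange/variance identity $\sum_{u\neq v}(d(u)-d(v))^{2}=2(nZ_{G}-4m^{2})$. The only difference is in how the sum over edges is compared to the sum over all pairs: you discard the nonedge terms outright, whereas the paper interposes Fiedler's characterization of $\lambda_{\max}$ to keep the factor $\lambda_{\max}/n\leq 1$, which is precisely what upgrades the bound to \eqref{eq:new}. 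So your route is the elementary specialization of the paper's, obtained by setting $\lambda_{\max}\leq n$. One small correction to your closing aside: the refinement in Theorem \ref{thm:main} is \emph{not} obtained from the Rayleigh-quotient bound $d^{T}Ld\leq\lambda_{\max}Z_{G}$, which is generally \emph{not} smaller than $nZ_{G}-4m^{2}$ (for a regular graph the latter is $0$ while the former is positive); the correct spectral step is Fiedler's identity, which yields $d^{T}Ld\leq\frac{\lambda_{\max}}{n}\left(nZ_{G}-4m^{2}\right)$.
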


Let us now recall the definition of the Laplacian matrix $L$ of the graph $G=(V,E)$ whose vertices are labelled $\{1,2,\ldots,n\}$:
$$
L_{ij} = \begin{cases}
-1  & \text{, if } (i,j) \in E \\
0   & \text{, if } (i,j) \notin E \text{ and } i \neq j\\
\sum_{k \neq i}{L_{ik}} & \text{, if } i=j.
\end{cases}
$$

It is obvious from the definition that $L$ is a positive semidefinite matrix. Surveys of its variegated and fascinating properties can be found in \cite{Mer94,Moh97,Zha11}. One simple fact will be germane to us here: the largest eigenvalue $\lambda_{\max}$ of $L$ satisfies $\lambda_{\max} \leq n$. 


We can now state our new result which is clearly an improvement upon the Zhou-Luo bound:
\begin{thm}\label{thm:main}
Let $G$ be a graph on $n$ vertices and with $m$ edges. 
Then:
\begin{equation}\label{eq:new}
I(G) \leq \sqrt{m(nZ_{G}-4m^{2})(\lambda_{\max}/n)}.
\end{equation}
\end{thm}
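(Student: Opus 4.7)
The plan is to follow what is essentially the Zhou-Luo strategy but refuse to throw away information at the last step. Starting from the definition and applying Cauchy-Schwarz to the $m$ edge-summands each equal to $|d(u)-d(v)| \cdot 1$, one obtains
\[
I(G)^{2} \leq m \sum_{\{u,v\} \in E(G)} (d(u)-d(v))^{2}.
\]

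Next I would reinterpret the right-hand sum as a Laplacian quadratic form: letting $\mathbf{d} \in \mathbb{R}^{n}$ be the degree vector, the familiar identity
\[
\mathbf{d}^{T} L \mathbf{d} = \sum_{\{u,v\} \in E(G)} (d(u)-d(v))^{2}
\]
turns the problem into bounding $\mathbf{d}^{T} L \mathbf{d}$. The Zhou-Luo bound is recovered by the crude estimate $\mathbf{d}^{T} L \mathbf{d} \leq n \|\mathbf{d}\|^{2} = n Z_{G}$ followed by a little algebra; the whole point is to improve this.

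The improvement comes from projecting out the nullspace of $L$. Write $\mathbf{d} = \bar{d}\,\mathbf{1} + \mathbf{d}^{\perp}$ with $\bar{d} = \frac{1}{n}\sum_u d(u) = \frac{2m}{n}$ and $\mathbf{d}^{\perp} \perp \mathbf{1}$. Since $L\mathbf{1} = 0$, the constant component is annihilated and $\mathbf{d}^{T} L \mathbf{d} = (\mathbf{d}^{\perp})^{T} L \mathbf{d}^{\perp}$. On the subspace $\mathbf{1}^{\perp}$ the operator $L$ is bounded by $\lambda_{\max}$, so
\[
\mathbf{d}^{T} L \mathbf{d} \leq \lambda_{\max} \|\mathbf{d}^{\perp}\|^{2} = \lambda_{\max}\bigl(Z_{G} - n \bar{d}^{2}\bigr) = \frac{\lambda_{\max}}{n}\bigl(nZ_{G} - 4m^{2}\bigr).
\]
Substituting back into the Cauchy-Schwarz inequality and taking square roots yields exactly \eqref{eq:new}.

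There is no real obstacle: the only non-mechanical step is noticing that one should split $\mathbf{d}$ along $\mathbf{1}$ before invoking the spectral bound, and that after the split the leftover norm $\|\mathbf{d}^{\perp}\|^{2} = Z_{G} - 4m^{2}/n$ conveniently produces the same expression $nZ_{G}-4m^{2}$ that already appears in Theorem \ref{thm:zl}. Using $\lambda_{\max} \leq n$ at the end recovers the Zhou-Luo bound, confirming that \eqref{eq:new} is indeed always at least as sharp.
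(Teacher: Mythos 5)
Your proof is correct, and its skeleton (Cauchy--Schwarz over the $m$ edges, then the identity \eqref{eq:xlx} turning the edge sum into $\mathbf{d}^{T}L\mathbf{d}$) coincides with the paper's. Where you diverge is in the middle step: the paper bounds $\mathbf{d}^{T}L\mathbf{d}$ by invoking Fiedler's characterization of $\lambda_{\max}$ (Lemma \ref{lem:fiedler}) and then evaluating the double sum $\sum_{u}\sum_{v}(d(u)-d(v))^{2}$ via Lagrange's identity, whereas you decompose $\mathbf{d}=\bar{d}\,\mathbf{1}+\mathbf{d}^{\perp}$, use $L\mathbf{1}=0$, and apply the plain spectral bound $L\preceq\lambda_{\max}I$ to get $\mathbf{d}^{T}L\mathbf{d}\leq\lambda_{\max}\|\mathbf{d}^{\perp}\|^{2}=\frac{\lambda_{\max}}{n}(nZ_{G}-4m^{2})$. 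These are really two packagings of the same computation --- Fiedler's quotient has $2n\|x^{\perp}\|^{2}$ in its denominator, which is exactly what Lagrange's identity produces --- but your version is self-contained and arguably cleaner: it needs no external lemma, makes transparent \emph{why} the quantity $nZ_{G}-4m^{2}$ appears (it is $n$ times the variance-type norm of the degree vector), and makes the comparison with Theorem \ref{thm:zl} immediate via $\lambda_{\max}\leq n$. One cosmetic note: the inequality $\mathbf{d}^{T}L\mathbf{d}=(\mathbf{d}^{\perp})^{T}L\mathbf{d}^{\perp}\leq\lambda_{\max}\|\mathbf{d}^{\perp}\|^{2}$ holds simply because $\lambda_{\max}$ is the largest eigenvalue of $L$ on all of $\mathbb{R}^{n}$; the projection is needed only to shrink the norm on the right, not to legitimize the spectral bound. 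Incidentally, your derivation gets the sign right where the paper's final displayed line has a typo ($4m^{2}-nZ_{G}$ instead of $nZ_{G}-4m^{2}$).
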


\section{Proof of the main result}
The quadratic form defined by $L$ has the following useful expression (where we identify the vector $x \in \mathbb{R}^{n}$ with a function $x:V(G) \rightarrow \mathbb{R}$):
\begin{equation}\label{eq:xlx}
x^{T}Lx=\sum_{(u,v) \in E(G)}{(x(u)-x(v))^{2}}.
\end{equation}

We also need Fiedler's \cite{Fie75} well-known characterization of $\lambda_{\max}$:
\begin{lem}\label{lem:fiedler}
$$
\lambda_{\max}=2n \max_{x}  \frac{\sum_{(u,v) \in
        E(G)}{(x(u)-x(v))^{2}}} {\sum_{u \in
        V(G)}\sum_{v \in V(G)}{(x(u)-x(v))^{2}}},
$$
where $x$ is a nonconstant vector.
\end{lem}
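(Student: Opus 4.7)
The plan is to reduce Fiedler's ratio to the classical Rayleigh quotient for $L$. First, by \eqref{eq:xlx} the numerator equals $x^{T}Lx$. For the denominator, the elementary identity
\begin{equation*}
\sum_{u \in V(G)}\sum_{v \in V(G)} (x(u)-x(v))^{2} = 2n\sum_{u} x(u)^{2} - 2\left(\sum_{u} x(u)\right)^{2}
\end{equation*}
holds by expanding the square and separating variables. Letting $\mathbf{1}$ denote the all-ones vector, the quantity $2n$ times the ratio displayed in the lemma therefore simplifies to $n \cdot x^{T}Lx / (n\|x\|^{2} - (x^{T}\mathbf{1})^{2})$.

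Next I would exploit the fact that $L\mathbf{1} = 0$, which is immediate from the rows of $L$ summing to zero. Writing any nonconstant $x$ as $x = \alpha \mathbf{1} + y$ with $y \perp \mathbf{1}$, one checks that $x^{T}Lx = y^{T}Ly$, that $x^{T}\mathbf{1} = \alpha n$, and consequently that $n\|x\|^{2} - (x^{T}\mathbf{1})^{2} = n\|y\|^{2}$. The ratio therefore collapses to $y^{T}Ly / \|y\|^{2}$, and maximising over nonconstant $x$ becomes equivalent to maximising this Rayleigh quotient over nonzero $y$ orthogonal to $\mathbf{1}$.

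Since $L$ is symmetric positive semidefinite with $\mathbf{1}$ spanning its kernel (assuming $G$ is connected; the disconnected case is analogous, as $\lambda_{\max}$ still corresponds to an eigenvector orthogonal to $\mathbf{1}$), the Courant--Fischer theorem yields $\max_{y \perp \mathbf{1},\, y \neq 0} y^{T}Ly / \|y\|^{2} = \lambda_{\max}$, attained at an associated eigenvector. Combining this with the simplifications above gives Fiedler's formula. I do not foresee a serious obstacle; the only delicate point is verifying that the $\mathbf{1}$-component of $x$ drops out of numerator and denominator \emph{simultaneously}, so that the constraint ``$x$ nonconstant'' matches precisely with ``$y \neq 0$, $y \perp \mathbf{1}$'' and the maximum is actually attained.
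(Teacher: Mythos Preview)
Your argument is correct and is the standard derivation: rewrite the numerator as $x^{T}Lx$, expand the denominator to $2n\|x\|^{2}-2(x^{T}\mathbf{1})^{2}$, and then use the orthogonal decomposition $x=\alpha\mathbf{1}+y$ together with $L\mathbf{1}=0$ to reduce the whole expression to the Rayleigh quotient $y^{T}Ly/\|y\|^{2}$ over nonzero $y\perp\mathbf{1}$, whose maximum is $\lambda_{\max}$ by Courant--Fischer. Your remark about the disconnected case is also fine: since $\mathbf{1}$ is always a null vector of $L$, any eigenvector for $\lambda_{\max}>0$ may be chosen orthogonal to $\mathbf{1}$, so the restriction does not lower the supremum.

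As for comparison with the paper: there is nothing to compare. The paper does not prove Lemma~\ref{lem:fiedler}; it simply quotes it as Fiedler's well-known characterization with a reference to \cite{Fie75}. You have supplied a self-contained proof where the paper relies on citation, and the proof you give is essentially the one Fiedler himself gives (and the same computation the paper later exploits, in the guise of Lagrange's identity, when estimating $d^{T}Ld$).
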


\begin{proof}[Proof of Theorem \ref{thm:main}]
The first step is to apply the Cauchy-Schwarz inequality:
\begin{equation}\label{eq:cs}
I(G)=\sum_{(u,v) \in E(G)}{|d(u)-d(v)|} \leq \sqrt{m} \sqrt{\sum_{(u,v) \in E(G)}{(d(u)-d(v))^{2}}}.
\end{equation}

In light of \eqref{eq:xlx} we have:
$$
I(G) \leq \sqrt{m}\sqrt{d^{T}Ld}.
$$
We now turn to estimate $d^{T}Ld$ using Lemma \ref{lem:fiedler} and Lagrange's identity:
$$
d^{T}Ld \leq  \frac{\lambda_{\max}}{2n} \sum_{u \in
        V(G)}\sum_{v \in V(G)}{(d(u)-d(v))^{2}}=
$$
$$
=\frac{\lambda_{\max}}{n}[(\sum_{v \in V(G)}{d(v)})^{2}-n\sum_{v \in V(G)}{d(u)^{2}}].
$$
Clearly, the latter expression is equal to 
$$
\frac{\lambda_{\max}}{n}(4m^{2}-nZ_{G}).
$$
\end{proof}

\section{An example}

Consider the \emph{yoke} graph $G=Y_{n_{1},n_{2}}$ which consists of two cycles of lengths $n_{1}$ and $n_{2}$ ($n_{1}+n_{2}=n$), connected by an edge. It is not hard to see that in this case $I(G)=4$. 

\begin{figure}[here]\label{fig:1}
\includegraphics[height=4cm,width=5cm]{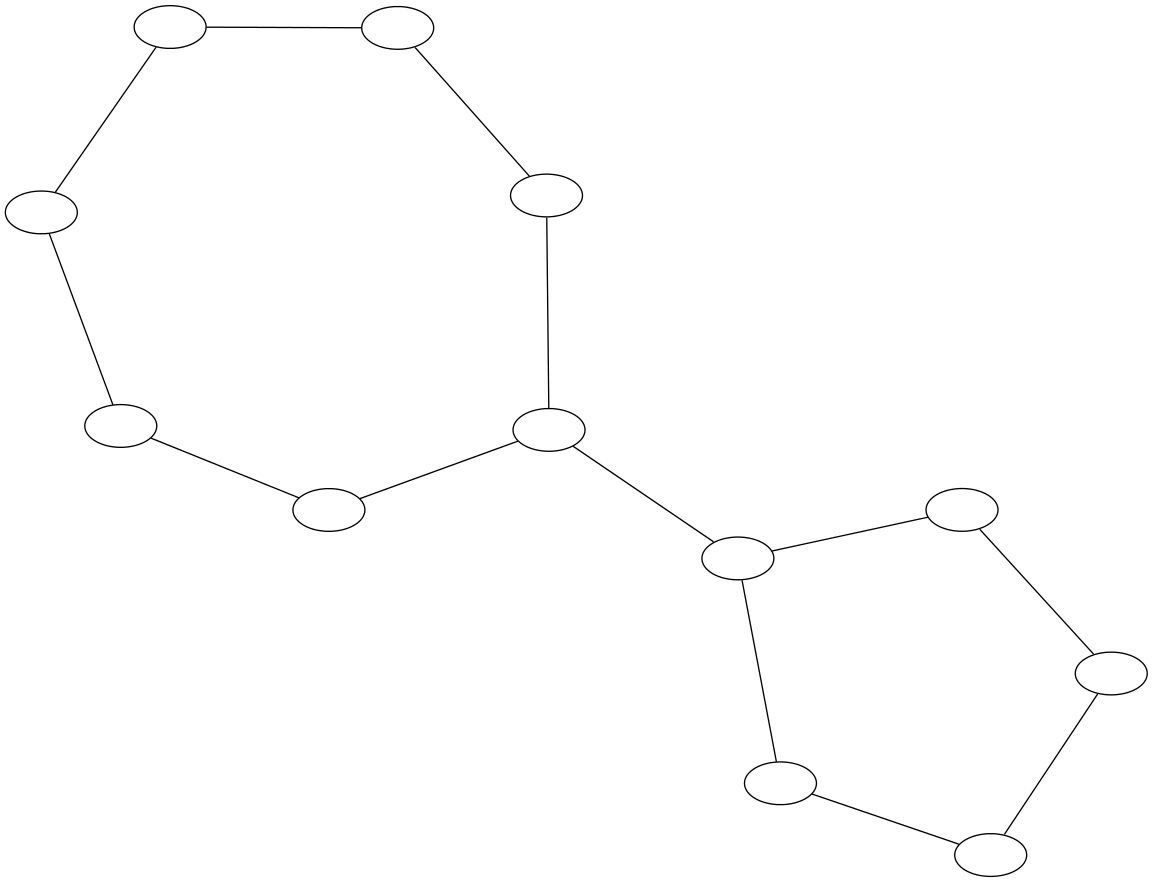} 
\caption{The graph $Y(7,5)$}
\end{figure}

In order to compare the bounds given by Theorems \ref{thm:zl} and \ref{thm:main} we observe that:
$$
m=n+1, Z_{G}=4n+10.
$$

Therefore, the bound given by Theorem \ref{thm:zl} is:
\begin{equation}\label{eq:ex_zl}
I(G) \leq \sqrt{2(n+1)(n-2)}=\Theta(n).
\end{equation}

To estimate $\lambda_{\max}$ we can use a result due to Merris \cite{Merris98b}. To state it, we define $m(v)$ to be the average degree of the neighbours of vertex $v$.
\begin{lem}\cite{Merris98b}\label{lem:merris}
$\lambda_{\max} \leq \max\{d(v)+m(v) \mid v \in V(G)\}$.
\end{lem}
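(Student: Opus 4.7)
The plan is to estimate the Rayleigh quotient $x^{T}Lx/x^{T}x$, whose maximum equals $\lambda_{\max}$, by exploiting the quadratic-form identity \eqref{eq:xlx}. The numerator is a sum over edges of $(x(u)-x(v))^{2}$, so if each edge contribution can be bounded by a weighted combination of $x(u)^{2}$ and $x(v)^{2}$ whose weights, when regrouped per vertex, collapse to $d(v)+m(v)$, then the Rayleigh characterization will deliver the claim.

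The edge-wise tool I would use is the elementary inequality
$$
(a-b)^{2} \leq (1+t)\,a^{2}+(1+1/t)\,b^{2},
$$
valid for every real $a,b$ and every $t>0$; it is immediate from AM-GM applied to the cross term $-2ab$. The decisive step is to select $t=d(v)/d(u)$ on the edge $\{u,v\}$, which rewrites the bound as
$$
(x(u)-x(v))^{2} \leq \frac{d(u)+d(v)}{d(u)}\,x(u)^{2}+\frac{d(u)+d(v)}{d(v)}\,x(v)^{2}.
$$
Summing over $E(G)$ and exchanging the order of summation, each vertex $u$ picks up the coefficient
$$
\sum_{v \sim u}\frac{d(u)+d(v)}{d(u)}=d(u)+\frac{1}{d(u)}\sum_{v \sim u}d(v)=d(u)+m(u)
$$
in front of $x(u)^{2}$. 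Factoring out $\max_{u}(d(u)+m(u))$ and comparing with $\sum_{v}x(v)^{2}$ yields precisely the asserted bound.

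The only nontrivial step is guessing the weighting $t=d(v)/d(u)$, which is essentially forced by the desire to produce an average of neighbour degrees upon regrouping; everything else is bookkeeping, and isolated vertices cause no trouble because they contribute nothing to the edge sum. An essentially equivalent route is to observe that $D^{-1}LD$ is similar to $L$ and therefore shares its spectrum, and then bound its spectral radius by its maximum absolute row sum: row $u$ has diagonal entry $d(u)$ and off-diagonal entries $-d(v)/d(u)$ for $v \sim u$, so the row sum of absolute values is again $d(u)+m(u)$.
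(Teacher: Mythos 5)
Your proof is correct. Note that the paper does not prove this lemma at all --- it is quoted from Merris \cite{Merris98b} as a black box --- so there is no in-paper argument to compare against; you have supplied one, which is a net gain. Your first route is a clean, self-contained derivation: the weighted AM--GM bound $(x(u)-x(v))^{2}\leq(1+t)x(u)^{2}+(1+1/t)x(v)^{2}$ with the choice $t=d(v)/d(u)$ is legitimate (both degrees are positive because $\{u,v\}$ is an edge), and regrouping the edge sum vertex by vertex does produce the coefficient $d(u)+m(u)$ in front of $x(u)^{2}$, after which the Rayleigh characterization of $\lambda_{\max}$ finishes the job. Your second, alternative route is essentially Merris's original argument: $L$ and $D^{-1}LD$ are similar, and the Gershgorin/maximum-absolute-row-sum bound on the spectral radius of the latter is exactly $\max_{u}(d(u)+m(u))$; this version needs the small disclaimer that $D$ must be invertible, so isolated vertices should be split off first (they only contribute the eigenvalue $0$ and a degenerate $m(v)$, exactly as your first argument already observes). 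Either way the lemma is established.
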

Using this lemma we obtain that $\lambda_{\max} \leq \frac{16}{3}$ for any yoke graph and therefore the bound of Theorem \ref{thm:main} is at least as good as:
\begin{equation}\label{eq:ex_main}
I(G) \leq \sqrt{\frac{32}{3}\frac{(n+1)(n-2)}{n}}=\Theta(\sqrt{n}).
\end{equation}

Obviously, \eqref{eq:ex_main} is much nearer to the true value of $I(G)$ than \eqref{eq:ex_zl}, although it still leaves something to be desired.

\section{Another example - trees}
We wish now to compare Theorem \ref{thm:main} to a specialized result of Zhou and Luo which gives a diffferent and very interesting bound for trees. 
\begin{thm}\cite[Theorem 4]{ZhoLuo08}\label{thm:tree}
Let $T$ be a tree with $p$ pendant vertices. Then $I(G) \leq p(p-1)$.
\end{thm}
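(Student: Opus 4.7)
The plan is to proceed by induction on the number of pendant vertices $p$. The base case $p=2$ is trivial since $T$ must be a path and $I(T)=2=p(p-1)$. The key auxiliary fact I would establish first is that every vertex of a tree with $p$ leaves has degree at most $p$; this follows because deleting a vertex of degree $d$ splits $T$ into $d$ subtrees, each of which must contain at least one leaf of $T$ (a singleton component is itself a leaf, while a larger component contributes one of its own leaves that is not adjacent to the deleted vertex).

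For the inductive step with $p\ge 3$, I would identify a ``leaf branch'' to strip away. Starting from any leaf $v_0$, follow the unique path $v_0,v_1,\ldots,v_k$ through vertices of degree $2$ until reaching a vertex $v_k$ with $d(v_k)\ne 2$. Because $T$ is not a path, this walk must terminate at a branching vertex with $d(v_k)\ge 3$. Setting $T':=T-\{v_0,\ldots,v_{k-1}\}$ produces a tree in which $v_k$ still has degree $\ge 2$, so no new leaves are created and $T'$ has exactly $p-1$ leaves, to which the induction hypothesis applies.

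The heart of the argument is the comparison of $I(T)$ with $I(T')$. The edges that were removed telescope to contribute exactly $d(v_k)-1$ to $I(T)$: when $k\ge 2$, the leaf edge contributes $1$, any interior degree-$2$ edges contribute $0$, and the edge into $v_k$ contributes $d(v_k)-2$; the case $k=1$ is even more direct. Meanwhile, the imbalance on each of the $d(v_k)-1$ surviving edges at $v_k$ changes by at most $1$, thanks to the elementary bound $\bigl||a-b|-|a-1-b|\bigr|\le 1$. Combining these gives $I(T)-I(T')\le 2(d(v_k)-1)\le 2(p-1)$, and the inductive hypothesis yields
\[
I(T)\le I(T')+2(p-1)\le (p-1)(p-2)+2(p-1)=p(p-1).
\]

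The main obstacle is choosing the leaf branch correctly so that it ends at a branching vertex rather than at another leaf; otherwise $T'$ would still have $p$ leaves and the induction would fail to progress. Together with the degree bound $d(v)\le p$, which is precisely what is needed for the two ``$(p-1)$'' terms to match, these are the only ingredients demanding genuine (if mild) care, and the rest of the calculation is routine.
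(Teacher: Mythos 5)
Your proof is correct: the auxiliary degree bound $d(v)\le p$, the choice of a pendant path ending at a vertex of degree at least $3$ (which exists precisely because $p\ge 3$ rules out the path), the telescoping contribution $d(v_k)-1$ of the deleted edges, and the perturbation bound on the $d(v_k)-1$ surviving edges at $v_k$ all check out, and the resulting recursion $I(T)\le I(T')+2(d(v_k)-1)\le (p-1)(p-2)+2(p-1)=p(p-1)$ is exactly tight for stars, which is reassuring. Note, however, that the paper itself states this theorem only as a citation of Zhou and Luo and gives no proof, so there is nothing in the text to compare your argument against; for the record, a shorter non-inductive route is to bound $|d(u)-d(v)|\le |d(u)-2|+|d(v)-2|$, sum to get $I(T)\le\sum_v d(v)\,|d(v)-2|$, and then use the tree identity $\sum_{d(v)\ge 3}(d(v)-2)=p-2$ together with $d(v)\le p$ to reach the same bound $p+p(p-2)=p(p-1)$.
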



Before reporting a comparison between the two bounds, we wish to point out that by an observation of Albertson (\cite[Corollary 5]{Alb97}) $I(G)$ must be an even integer. Therefore, any upper bound on $I(G)$ can be replaced by the largest even integer not exceeding it. 

We have computed the bounds of Theorem \ref{thm:main} (truncated to an even integer, as explained above) and of Theorem \ref{thm:tree} (which always produces even integers) for the $106$ non-isomorphic trees on ten vertices. The summary of the results is:
\begin{itemize}
\item
For $46$ trees Theorem \ref{thm:main} is better than Theorem \ref{thm:tree}.
\item
For $18$ trees both bounds agree.
\item
For $42$ trees Theorem \ref{thm:tree} is better than Theorem \ref{thm:main}.
\end{itemize}

To take some specific examples, consider first the tree in Figure \ref{fig:trees}. It is easy to compute that $I(T_{15})=22$. Since the graph has $7$ pendant vertices, Theorem \ref{thm:tree} yields the estimate $I(T_{15}) \leq 42$. On the other hand, the right-hand side of \eqref{eq:new} is $27.8614$ and so Theorem \ref{thm:main} gives us the estimate $I(G) \leq 26$.
\begin{figure}[here]
\includegraphics[height=4cm,width=5cm]{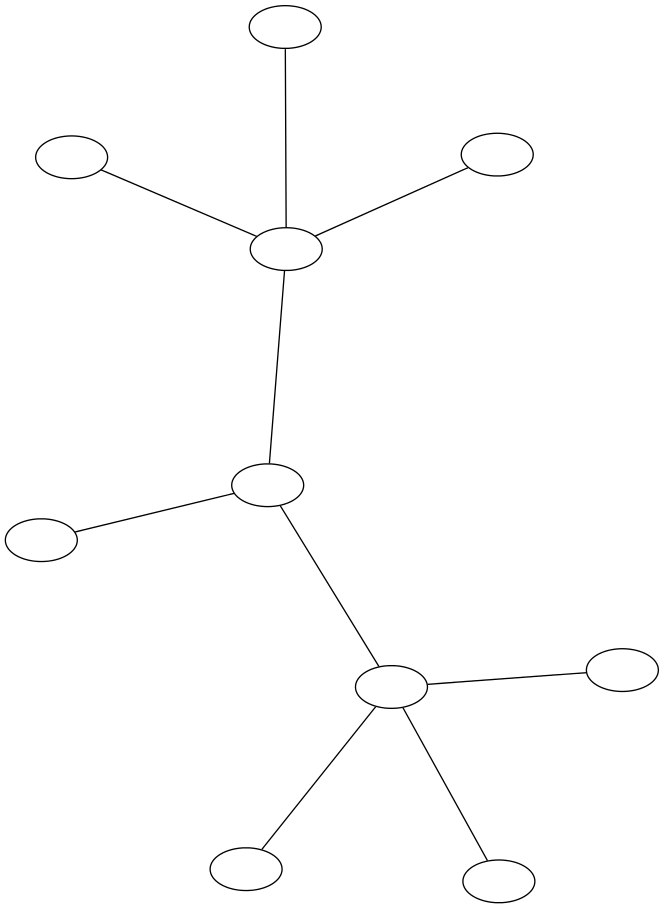} 
\caption{The tree $T_{15}$}\label{fig:trees}
\end{figure}

On the other hand, if we take the path $P_{n}$ on $n$ vertices, then $I(G)=2$ and this value coincides with the bound of Theorem \ref{thm:tree} since $p=2$ in this case. However, Theorem \ref{thm:main} gives a poor estimate in this case - we have $m=n-1,Z_{G}=4n-6$, and $\lambda_{\max}=2(1+\cos{\frac{\pi}{n}})$. Since $\lambda_{\max}$ tends to $4$ as $n$ grows, we can calculate that the right-hand side of \eqref{eq:new} tends to $\sqrt{\frac{8(n-1)(n-2)}{n}}=\Theta(\sqrt{n})$.

\bibliographystyle{abbrv}
\bibliography{nuim}

\providecommand{\noopsort}[1]{}
\begin{thebibliography}{10}

\bibitem{AbdCohDim12}
H.~Abdo, N.~Cohen, and D.~Dimitrov.
\newblock Bounds and computation of irregularity of a graph.
\newblock \url{http://arxiv.org/abs/1207.4804}, 2012.

\bibitem{Alb97}
M.~O. Albertson.
\newblock The irregularity of a graph.
\newblock {\em Ars Comb.}, 46:219--225, 1997.

\bibitem{FathTabar11}
G.~H. Fath-Tabar.
\newblock Old and new {Z}agreb indices of graphs.
\newblock {\em MATCH Commun. Math. Comput. Chem.}, 65:79--84, 2011.

\bibitem{Fie75}
M.~Fiedler.
\newblock A property of eigenvectors of nonnegative symmetric matrices and its
  application to graph theory.
\newblock {\em Czech. Math. J.}, 25 (100):619--633, 1975.

\bibitem{HanMel05}
P.~Hansen and H.~M\'{e}lot.
\newblock Variable neighborhood search for extremal graphs. {IX}: {B}ounding
  the irregularity of a graph.
\newblock In S.~Fajtlowicz, P.~W. Fowler, P.~Hansen, M.~F. Janowitz, and F.~S.
  Roberts, editors, {\em Graphs and Discovery}, pages 253--264. {Providence,
  RI: American Mathematical Society (AMS)}, 2005.

\bibitem{HenRau07}
M.~A. Henning and D.~Rautenbach.
\newblock On the irregularity of bipartite graphs.
\newblock {\em Discrete Math.}, 307(1--3):1467--1472, 2007.

\bibitem{Mer94}
R.~Merris.
\newblock Laplacian matrices of graphs: a survey.
\newblock {\em Linear Algebra Appl.}, 197/8:143--176, 1994.

\bibitem{Merris98b}
R.~Merris.
\newblock A note on {L}aplacian graph eigenvalues.
\newblock {\em Linear Algebra Appl.}, 285(1--3):33--35, 1998.

\bibitem{Moh97}
B.~Mohar.
\newblock Some applications of {L}aplace eigenvalues of graphs.
\newblock In G.~Hahn and G.~Sabidussi, editors, {\em Graph symmetry: algebraic
  methods and applications}, volume 497 of {\em NATO ASI Ser. C}, pages
  225--275. Kluwer Academic Publishers, 1997.

\bibitem{Zha11}
X.-D. Zhang.
\newblock The {L}aplacian eigenvalues of graphs: a survey.
\newblock \url{http://arxiv.org/abs/1111.2897}, 2011.

\bibitem{ZhoLuo08}
B.~Zhou and W.~Luo.
\newblock On irregularity of graphs.
\newblock {\em Ars Comb.}, 88:55--64, 2008.

\end{thebibliography}
\end{document}